\newcommand{\ssc}{\hyperref[de:ssc]{standard source condition}}
\newcommand{\svi}{\hyperref[de:svi]{symmetrized variational inequality}}
\newcommand{\hvi}{\hyperref[de:hvi]{homogeneous variational inequality}}
\newcommand{\ivi}{\hyperref[de:ivi]{inhomogeneous variational inequality}}
\newcommand{\nfcr}{\hyperref[de:nfcr]{noise-free convergence rate}}
\newcommand{\ncr}{\hyperref[de:cr]{convergence rate}}
\newcommand{\scr}{\hyperref[de:scr]{spectral tail}}
\newcommand{\R}{\mathbbm R}
\newcommand{\N}{\mathbbm N}
\renewcommand{\d}{\mathrm d}
\newcommand{\argmin}{\mathop{\mathrm{argmin}}}
\newcommand{\abs}[1]{\left| #1 \right|}
\newcommand{\set}[1]{\left\{ #1 \right\}}
\newcommand{\norm}[1]{\| #1 \|}
\newcommand{\inner}[1]{\langle #1 \rangle}
\newcommand{\uad}{u_\alpha^\delta}
\newcommand{\ua}{u_\alpha}
\newcommand{\udag}{u^\dagger}
\newcommand{\yd}{y^\delta}
\newtheorem{theorem}{Theorem}
\newaliascnt{lemma}{theorem}
\newtheorem{lemma}[lemma]{Lemma}
\newaliascnt{corollary}{theorem}
\newaliascnt{proposition}{theorem}
\newtheorem{proposition}[proposition]{Proposition}
\newaliascnt{example}{theorem}
\newtheorem{example}[example]{Example}
\newaliascnt{definition}{theorem}
\newtheorem{definition}[definition]{Definition}
\theoremstyle{nonumberplain}
\newtheorem{remark}{Remark}
\newtheorem{proof}{Proof}
\begin{document}

\title{Generalized Convergence Rates Results for Linear Inverse Problems in Hilbert Spaces}

\author{%
	Roman Andreev%
	\thanks{
		Johann Radon Institute for Computational and Applied Mathematics (RICAM),
		Altenbergerstrasse 69, A-4040 Linz, Austria
		({\tt roman.andreev@oeaw.ac.at})
	}
	\and
	Peter Elbau%
	\thanks{
		Computational Science Center, University of Vienna,
		Oskar-Morgenstern Platz 1, A-1090 Vienna, Austria
		({\tt peter.elbau@univie.ac.at})
	}
	\and
	Maarten V. de Hoop%
	\thanks{
		Center for Computational and Applied Mathemematics, Purdue University,
		West Lafayette, IN 47907, USA
		({\tt mdehoop@purdue.edu})
	}
	\and
	Lingyun Qiu%
	\thanks{
		Institute for Mathematics and its Applications, University of Minnesota, 
        Minneapolis, MN 55455, USA
        ({\tt qiu.lingyun@ima.umn.edu})
	}
	\and
	Otmar Scherzer%
	\thanks{
		Computational Science Center, University of Vienna and RICAM
		({\tt otmar.scherzer@univie.ac.at})
	}
}
\maketitle

\begin{abstract}
In recent years, a series of convergence rates conditions for regularization methods
has been developed. Mainly, the motivations for developing novel conditions
came from the desire to carry over convergence rates results from the Hilbert space setting to generalized Tikhonov regularization in Banach spaces. For instance, variational
source conditions have been developed and they were expected to be equivalent to standard source conditions for linear inverse problems in a Hilbert space setting (see Schuster et al
\cite{SchKalHofKaz12}). We show that this expectation does not hold. However, in the standard Hilbert space setting these novel conditions are optimal, which we prove by using
some deep results from Neubauer \cite{Neu97}, and generalize existing convergence rates results. The key tool in our analysis is a novel source condition, which we put into relation
to the existing source conditions from the literature. As a positive by-product, 
convergence rates results
can be proven without spectral theory, which is the standard technique for proving convergence rates for linear inverse problems in Hilbert spaces (see Groetsch \cite{Gro84}).
\end{abstract}

\section{Introduction}
In this paper we consider for some (not exactly known) data $y\in\mathcal R(L)$ the operator equation
\begin{equation}
\label{eq:op}
Lu = y\,,
\end{equation}
where $L : U \to V$ is a bounded linear operator between two real Hilbert spaces $U$ and $V$,
and
$\mathcal{R}(L)$ denotes its range.

Given some approximate data $\yd\in V$ with $\norm{y-\yd}\le\delta$, the objective is to reconstruct the minimal norm solution $\udag\in U$, that is the element fulfilling
\begin{equation*}
L\udag=y\quad\text{and}\quad \norm{\udag} = \inf \set{\norm{u} : Lu=y}\;.
\end{equation*}
Such a minimal norm solution exists for every $y\in\mathcal R(L)$ and is uniquely defined, see for example \cite[Theorem 2.5]{EngHanNeu96}.

The method of choice for performing this task is Tikhonov regularization, that is to find for arbitrary $\alpha>0$ the regularized solution
\begin{align}\label{e:uad}
\uad := \argmin_{u \in U} \set{ \norm{L u - \yd}^2 + \alpha \norm{u}^2 }\,.
\end{align}
Standard results on Tikhonov regularization guarantee the existence and uniqueness of the minimizer $\uad$ and that 
$\uad$ converges to $u^\dagger$ for an appropriate choice of $\alpha$ depending on $\delta$ as $\delta \searrow 0$, see for instance \cite[Theorem~5.1 and Theorem~5.2]{EngHanNeu96}.

Convergence rates conditions, moreover, guarantee a certain convergence rate
$\norm{\uad - \udag} = \mathcal{O}(f(\delta))$ as $\delta \searrow 0$ 
if again $\alpha$ is chosen to depend in the right way on $\delta$.

Two kinds of such convergence rates conditions have been developed:
\begin{itemize}
\item source conditions \cite{Gro84} and
\item variational source conditions \cite{HofKalPoeSch07,SchGraGroHalLen09,HeiHof09,SchKalHofKaz12}.
\end{itemize}
The goal of this paper is to put the different source conditions into perspective, together with three novel variational source conditions, which are presented here for the first time.
The main results on the relations in between the source conditions are summarized in a table form (cf. Figure \ref{fg:relations}).
Aside from these particular conditions the novelties are to show that these conditions are in fact more general than the classical source conditions,
and they are optimal in the sense that convergence rates of a certain order are only possible if these conditions are satisfied. The argumentation is based on a result
from Neubauer \cite{Neu97}.
Moreover, as a side product, this clarifies some assertion from \cite{SchKalHofKaz12} on the equivalence of standard and variational source conditions.

\section{Relations of Source Conditions in the Standard Setting}\label{sec:source}
The key to obtain convergence rate results for the regularized solution $\uad$, defined in \eqref{e:uad}, of the problem \eqref{eq:op} is to impose conditions on the minimal norm solution $\udag$. In the literature, various kind of such source conditions have been introduced.

\begin{definition}\label{de:source}
Let $U$ and $V$ be real Hilbert spaces, $L:U\to V$ be a bounded linear operator, and $y\in\mathcal R(L)$. Moreover, let $\udag$ denote the minimum-norm solution of the operator equation \eqref{eq:op}. 

Then, we say that the problem fulfills
\begin{itemize}

\item\label{de:ssc}
the standard source condition, see \cite{Gro84}, with the parameter $\nu\in(0,2]$ if
\begin{equation}\label{eq:source_old}
\udag \in \mathcal{R}((L^* L)^{\frac{\nu}{2}});
\end{equation}

\item\label{de:hvi}
the homogeneous variational inequality with the parameter $\nu\in(0,1]$ if there exists a constant $\beta \geq 0$ such that
\begin{equation}\label{eq:source_new_1a}
2\inner{\udag,u} \leq \beta \norm{L u}^\nu \norm{u}^{1-\nu}\quad\text{for every}\quad u \in U;
\end{equation}

\item\label{de:ivi}
the inhomogeneous variational inequality with the parameter $\mu\in(0,1]$, introduced in \cite{HofKalPoeSch07} for the case $\mu=1$ and in \cite{HeiHof09} for general $\mu\in(0,1]$ in the setting of non-linear problems, if there exist constants $\beta\ge0$ and $\gamma\in[0,1)$ such that
\begin{equation}\label{eq:source_new_hof}
2\inner{\udag,u} \leq \beta\norm{L u}^\mu + \gamma\norm{u}^2\quad\text{for every}\quad u \in U;
\end{equation}

\item\label{de:svi}
the symmetrized variational inequality with the parameter $\nu\in(0,2]$ if there exists a constant $\beta\geq 0$ such that
\begin{equation}\label{eq:source_symmetrized}
2\inner{\udag,u} \leq \beta \norm{L^*L u}^{\frac\nu2} \norm{u}^{1-\frac\nu2}\quad\text{for every}\quad u \in U.
\end{equation}

\end{itemize}
\end{definition}

\begin{remark}
Let $\rho \in (0,2]$. The family of variational source conditions,
\[ 2\inner{\udag,u} \le \beta \norm{(L^*L)^{\frac\rho2}u}^{\frac\nu\rho}\norm{u}^{1-\frac\nu\rho},\quad\nu\le\rho,\]
puts the \hvi, the \svi, and the \ssc\ under one umbrella, when we set $\rho=1$, $\rho=2$, and $\rho=\nu$ (see the proof of \autoref{l:first_extended}\ref{en:first_extended_ssc}), respectively. 
However, \autoref{th:neu_follow} and \autoref{th:neu} show that all these variational source conditions with the same parameter $\nu$ and a parameter $\rho>\nu$ are equivalent to each other.
\end{remark}

Note that the \ivi\ is not homogeneous with respect to $u\in U$, as opposed to the other three source conditions.

Let us first discuss the relation between the first three source conditions.
\begin{lemma}
\label{l:first}
	Let $U$ and $V$ be real Hilbert spaces,
	$L : U \to V$ be a bounded linear operator, $y\in\mathcal R(L)$, and $\nu \in (0, 1]$.
	Then, we have that
	\begin{enumerate}
	\item\label{en:ssc_hvi}
		the \ssc\ for $\nu$ implies the \hvi\ with the same parameter $\nu$,
	\item\label{en:hvi_ivi}
		the \hvi\ with the parameter $\nu$ implies the \ivi\ with the parameter $\mu=\frac{2\nu}{1+\nu}$, and
	\item\label{en:nu_eq_1}
		the \ivi\ with the parameter $\mu=1$ implies the \ssc\ with the parameter $\nu=1$.
	\end{enumerate}
\end{lemma}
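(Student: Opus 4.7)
The plan is to handle the three implications separately, using an interpolation inequality for \ref{en:ssc_hvi}, Young's inequality for \ref{en:hvi_ivi}, and a scaling-plus-duality argument for \ref{en:nu_eq_1}.

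For \ref{en:ssc_hvi}, I would write $\udag = (L^*L)^{\nu/2} w$ with $w \in U$ according to the \ssc\ and move the self-adjoint operator to the other factor via Cauchy--Schwarz:
\[ 2\inner{\udag, u} = 2\inner{w,(L^*L)^{\nu/2} u} \le 2\norm{w}\,\norm{(L^*L)^{\nu/2} u}. \]
The key step is the Heinz interpolation inequality applied to the positive self-adjoint operator $A:=(L^*L)^{1/2}$ with exponent $\nu\in(0,1]$, giving $\norm{A^\nu u}\le \norm{Au}^\nu\norm{u}^{1-\nu}$, that is, $\norm{(L^*L)^{\nu/2} u}\le\norm{Lu}^\nu\norm{u}^{1-\nu}$. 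Setting $\beta=2\norm{w}$ yields the \hvi.

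For \ref{en:hvi_ivi}, starting from the \hvi\ I would split the right-hand side with the weighted Young inequality $ab \le \tfrac{\epsilon^p a^p}{p} + \tfrac{b^q}{\epsilon^q q}$ applied to $a=\norm{Lu}^\nu$, $b=\norm{u}^{1-\nu}$, with conjugate exponents $p=\tfrac{2}{1+\nu}$ and $q=\tfrac{2}{1-\nu}$. Because $\nu p = \tfrac{2\nu}{1+\nu} = \mu$ and $(1-\nu)q = 2$, this gives
\[ 2\inner{\udag,u} \le \tfrac{\beta \epsilon^p}{p}\norm{Lu}^{\mu} + \tfrac{\beta}{\epsilon^q q}\norm{u}^2 \qquad\text{for every } u\in U, \]
and choosing $\epsilon$ so large that $\gamma:=\beta/(\epsilon^q q)<1$ produces the \ivi\ with the required exponent. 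The boundary case $\nu=1$ corresponds to $q=\infty$ and has to be treated separately, but is trivial: the \hvi\ with $\nu=1$ is the \ivi\ with $\mu=1$ and $\gamma=0$.

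For \ref{en:nu_eq_1}, the first task is to get rid of the inhomogeneous term in $2\inner{\udag,u}\le\beta\norm{Lu}+\gamma\norm{u}^2$. Substituting $\lambda u$ with $\lambda>0$ and dividing by $\lambda$ gives $2\inner{\udag,u}\le\beta\norm{Lu}+\gamma\lambda\norm{u}^2$; letting $\lambda\searrow 0$ and also applying the inequality to $-u$ produces the symmetric bound $\abs{2\inner{\udag,u}}\le\beta\norm{Lu}$ for every $u\in U$. Consequently the linear functional $Lu\mapsto 2\inner{\udag,u}$ is well defined and bounded on $\mathcal R(L)\subset V$; extending it continuously to $\overline{\mathcal R(L)}$ and by zero to all of $V$, the Riesz representation theorem supplies a vector $v\in V$ such that $\inner{v,Lu}=2\inner{\udag,u}$ for all $u\in U$, hence $2\udag = L^*v \in \mathcal R(L^*)=\mathcal R((L^*L)^{1/2})$, which is the \ssc\ with $\nu=1$. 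I expect this last implication to be the main obstacle: the scaling argument that reduces the \ivi\ to a homogeneous bound is the crucial trick, and the final identification $\mathcal R(L^*)=\mathcal R((L^*L)^{1/2})$ is the standard fact that links the duality argument to the classical source condition.
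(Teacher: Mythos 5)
Your proposal is correct and follows essentially the same route as the paper: the interpolation inequality $\norm{(L^*L)^{\nu/2}u}\le\norm{Lu}^\nu\norm{u}^{1-\nu}$ for \textit{(i)}, Young's inequality for \textit{(ii)}, and the scaling argument reducing the inhomogeneous inequality to $2\inner{\udag,u}\le\beta\norm{Lu}$ for \textit{(iii)}. The only difference is cosmetic: in \textit{(iii)} the paper cites \cite[Lemma 8.21]{SchGraGroHalLen09} for the equivalence of $\inner{\udag,v}\le C\norm{Tv}$ with $\udag\in\mathcal R(T^*)$, whereas you reprove that fact directly via the Riesz representation theorem and the identity $\mathcal R(L^*)=\mathcal R((L^*L)^{1/2})$.
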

\begin{proof}
	Let $\udag$ denote the minimum-norm solution of the operator equation \eqref{eq:op}.
	\begin{enumerate}
	 \item
		If the \ssc\ is fulfilled for some $\nu\in(0,1]$, then there exists an element $\omega\in U$ with $(L^*L)^{\frac\nu2}\omega=\udag$. Using now the interpolation inequality 
		\begin{equation}\label{eq:interpolation}
			\|(L^*L)^ru\| \le \|(L^*L)^qu\|^{\frac rq}\|u\|^{1-\frac rq}\quad\text{for all}\quad u\in U,\;0<r\le q,
		\end{equation}
		see for example \cite[Chapter 2.3]{EngHanNeu96}, with $r=\frac\nu2$ and $q=\frac12$, it follows for every $u\in U$ that
	        \[ 2 \inner{\udag, u} = \inner{2\omega,(L^*L)^{\frac{\nu}{2}}u} \le 2\norm{\omega}\norm{(L^*L)^{\frac\nu2}u} \leq 2 \norm{\omega} \norm{L u}^\nu \norm{u}^{1-\nu}, \]
		which is of the form \eqref{eq:source_new_1a} with the parameter~$\nu$.
	\item 
		If $\udag$ fulfills the variational inequality \eqref{eq:source_new_1a} for some parameters $\nu\in(0,1]$ and $\beta\ge0$, then Young's inequality implies for every $u\in U$ that
	       \[ 2 \inner{\udag, u} \le \beta\norm{L u}^{\nu} \norm{u}^{1-\nu} \le \frac{1+\nu}2 \beta^{\frac2{1+\nu}} \norm{L u}^{\frac{2\nu}{1+\nu}} + \frac{1-\nu}2 \norm{u}^2, \]
		so that the \ivi\ with the parameter $\mu=\frac{2\nu}{1+\nu}$ is fulfilled.
	\item
		If $\udag$ fulfills the inequality \eqref{eq:source_new_hof} for $\mu=1$ and some constants $\beta\ge0$ and $\gamma\in[0,1)$, then, by evaluating it at $u=tv$ for arbitrary $v\in U$ and $t>0$, we find in the limit $t\searrow0$ that
		\begin{equation}\label{eq:source_hof_hom}
			2\inner{\udag,v} \le \beta\norm{Lv} = \beta\norm{(L^*L)^{\frac12}v}\quad\text{for every}\quad v\in U.
		\end{equation}

		Now, it can be shown, see \cite[Lemma 8.21]{SchGraGroHalLen09},
		that if $T:U\to U$ is a bounded linear operator,
		then $\udag \in \mathcal{R}(T^*)$ if and only if
		there exists a constant $C > 0$ such that
		$\inner{ \udag, v } \leq C \norm{ T v }$
		for all $v \in U$.

		Thus, with $T=(L^*L)^{\frac12}$, we find that \eqref{eq:source_hof_hom} is equivalent to $\udag\in\mathcal R((L^*L)^{\frac12})$.
	\end{enumerate}
\end{proof}
\begin{remark}
That the \ssc\ for a parameter $\nu\in(0,1]$ implies the \ivi\ with the parameter $\mu=\frac{2\nu}{1+\nu}$ was already realized in \cite{HofYam10}.

The case $\nu = 1$ has been treated in more generality in \cite[Table 3.1]{SchGraGroHalLen09}.
\end{remark}

Thus, the \hvi\ and the \ivi\ cover only the parameter range $\nu\in(0,1]$ compared to the \ssc. However, the \svi\ is an extension of the \ssc\ in the full parameter range $\nu\in(0,2]$, 
as the following lemma shows. 
\begin{lemma}
\label{l:first_extended}
	Let $U$ and $V$ be real Hilbert spaces,
	$L:U\to V$ be a bounded linear operator, and $y\in\mathcal R(L)$.

	Then, we have that
	\begin{enumerate}
	\item\label{en:first_extended_svi}
		the \ssc\ with a parameter $\nu\in(0,2]$ implies the \svi\ with the same parameter $\nu$,
	\item\label{en:first_extended_ssc}
		the \svi\ with the parameter $\nu=2$ is equivalent to the \ssc\ with the parameter $\nu=2$.
	\end{enumerate}
\end{lemma}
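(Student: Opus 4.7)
The plan is to handle part \ref{en:first_extended_svi} by imitating the computation in \autoref{l:first}\ref{en:ssc_hvi}, but with the interpolation inequality \eqref{eq:interpolation} applied at a different pair of exponents, and then to obtain part \ref{en:first_extended_ssc} by combining part \ref{en:first_extended_svi} with the same range characterization that was invoked in the proof of \autoref{l:first}\ref{en:nu_eq_1}.

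For \ref{en:first_extended_svi}, assume the \ssc\ with parameter $\nu \in (0,2]$, so that $\udag = (L^*L)^{\nu/2}\omega$ for some $\omega \in U$. Moving the self-adjoint operator to the other argument and applying Cauchy--Schwarz gives
\[ 2\inner{\udag, u} = 2\inner{\omega, (L^*L)^{\frac{\nu}{2}} u} \le 2\norm{\omega}\, \norm{(L^*L)^{\frac{\nu}{2}} u} \]
for every $u \in U$. Since $\nu/2 \le 1$, the interpolation inequality \eqref{eq:interpolation} applies with $r = \nu/2$ and $q = 1$, yielding
\[ \norm{(L^*L)^{\frac{\nu}{2}} u} \le \norm{L^*L u}^{\frac{\nu}{2}}\,\norm{u}^{1-\frac{\nu}{2}}, \]
and combining the two displays produces \eqref{eq:source_symmetrized} with $\beta = 2\norm{\omega}$.

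For \ref{en:first_extended_ssc}, the implication from the \ssc\ to the \svi\ is the special case $\nu = 2$ of part \ref{en:first_extended_svi}. Conversely, if the \svi\ holds for $\nu = 2$, the inequality \eqref{eq:source_symmetrized} simplifies to
\[ 2\inner{\udag, u} \le \beta\norm{L^*L u} \quad\text{for every}\quad u \in U. \]
Invoking the range characterization from \cite[Lemma 8.21]{SchGraGroHalLen09} (already used in the proof of \autoref{l:first}\ref{en:nu_eq_1}) with $T = L^*L$ and using that $L^*L$ is self-adjoint, this is equivalent to $\udag \in \mathcal{R}(L^*L) = \mathcal{R}((L^*L)^{\frac{2}{2}})$, i.e., the \ssc\ with parameter $\nu = 2$.

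There is no real obstacle here: the whole argument is a short combination of Cauchy--Schwarz, the interpolation inequality with the right exponents, and the range-characterization lemma; the only point to watch is that the constraint $r \le q$ in \eqref{eq:interpolation} is exactly what restricts the bookkeeping to $\nu \in (0,2]$ in \ref{en:first_extended_svi}, and that the endpoint $\nu = 2$ is precisely where the interpolation step becomes an equality so no information is lost, which is what allows the converse in \ref{en:first_extended_ssc} to go through.
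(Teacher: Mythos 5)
Your proposal is correct. Part \ref{en:first_extended_ssc} coincides with the paper's argument: both reduce the symmetrized inequality at $\nu=2$ to $2\inner{\udag,u}\le\beta\norm{L^*Lu}$ and invoke \cite[Lemma 8.21]{SchGraGroHalLen09} with $T=L^*L$, exactly as in \autoref{l:first}\ref{en:nu_eq_1}. For part \ref{en:first_extended_svi} your route differs from the printed one: you start from the representation $\udag=(L^*L)^{\nu/2}\omega$, move the operator across the inner product, apply Cauchy--Schwarz, and then use the interpolation inequality \eqref{eq:interpolation} with $r=\nu/2$ and $q=1$, which is admissible precisely for $\nu\in(0,2]$ and yields the \svi\ with $\beta=2\norm{\omega}$ in one stroke. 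The paper instead bounds $2\inner{\udag,u}\le\beta\inner{L^*Lu,u}^{\nu/2}\norm{u}^{1-\nu}$ and applies Cauchy--Schwarz to $\inner{L^*Lu,u}$; as written, that intermediate bound is the \hvi\ and is stated only for $\nu\in(0,1]$, so it has to be chained with \autoref{l:first}\ref{en:ssc_hvi} and does not literally reach the range $\nu\in(1,2]$ claimed in the lemma. Your direct interpolation argument covers the full range $\nu\in(0,2]$ uniformly, so if anything it is the more complete of the two. One minor quibble: your closing remark that the converse in \ref{en:first_extended_ssc} goes through because the interpolation step becomes an equality at $\nu=2$ is not really the mechanism --- the converse rests entirely on the range-characterization lemma, not on sharpness of the interpolation inequality --- but this does not affect the validity of the proof.
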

\begin{proof}
	Let $\udag$ denote the minimum-norm solution of the problem \eqref{eq:op}.
	\begin{enumerate}
	\item
		From the inequality \eqref{eq:source_symmetrized} with some parameters $\nu\in(0,1]$ and $\beta\ge0$, we obtain by applying the Cauchy--Schwarz inequality for every $u\in U$ that
		\[ 2\inner{\udag,u} \le \beta\inner{L^*L u,u}^{\frac\nu2} \norm{u}^{1-\nu} \le \beta\norm{L^*Lu}^{\frac\nu2} \norm{u}^{1-\frac\nu2}, \]
		which is the \svi\ with the parameter $\nu$.
	\item
		The \svi\ with the parameter $\nu=2$ states that there exists a constant $\beta\ge0$ so that
		\[ \left<\udag,u\right> \le \beta \norm{L^*Lu} \quad\text{for all}\quad u\in U. \]
		Now, as in the proof of \autoref{l:first}\ref{en:nu_eq_1}, this is equivalent to $\udag\in\mathcal R(L^*L)$, see \cite[Lemma 8.21]{SchGraGroHalLen09}. 
	\end{enumerate}	
\end{proof}

The following two examples 
illustrate that the degree of ill-posedness of the operator $L$ 
is a criterion for equivalency of the different source conditions. 
Finer results,
establishing in particular the equivalence of the source conditions~\eqref{eq:source_new_1a} and~\eqref{eq:source_new_hof}
and the corresponding convergence rates, see \autoref{cor:convergence_rates},
will be derived in \autoref{cor:neu} below.

\begin{example}
\label{ex:general}
	Let $U$ and $V$ be real Hilbert spaces, $\nu \in (0, 1]$, $L : U \to V$ be a bounded linear operator so that $(L^*L)^{\nu/2}$ has closed range, and $y\in\mathcal R(L)$.

	Then, 
	\begin{enumerate}
	\item\label{en:general_ssc}
		the \ssc\ with parameter $\nu$, 
	\item
		the \hvi\ with parameter $\nu$, and
	\item\label{en:general_ivi}
		the \ivi\ with parameter $\mu=\frac{2\nu}{1+\nu}$
	\end{enumerate}
	are equivalent.
\end{example}
\begin{proof}
	In view of \autoref{l:first},
	we only need to show that \ref{en:general_ivi} implies \ref{en:general_ssc}.
	To that end
	recall that,
	if $T : U \to U$ is a bounded linear self-adjoint operator
	then
	its nullspace
	$\mathcal{N}(T)$
	is the orthogonal complement of the range $\mathcal{R}(T)$,
	and
	$U = \overline{\mathcal{R}(T)} \oplus \mathcal{N}(T)$.
	Since the range of $T = (L^*L)^{\nu/2}$ is closed by assumption,
	we have the orthogonal decomposition
	\begin{equation}
	\label{eq:Udec}
		 U = \mathcal{R}((L^*L)^{\frac{\nu}{2}}) \oplus  \mathcal{N}((L^*L)^{\frac{\nu}{2}}).
	\end{equation}
	Observe now that, if ${u} \in  \mathcal{N}((L^*L)^{\frac{\nu}{2}})$
	then
	\begin{equation*}
		\norm{L {u}}^2 = \inner{L {u}, L {u}} = \inner{(L^*L)^{\nu/2}{u}, (L^*L)^{1-\nu/2}{u}} = 0,
	\end{equation*}
	so that $L u = 0$.
	Therefore, if $\udag$ satisfies \eqref{eq:source_new_hof} with some constants $\beta\ge0$ and $\gamma\in[0,1)$,
	then
	\[
	2\inner{\udag,u} \leq  \gamma\norm{u}^2\quad\text{for every}\quad u\in \mathcal{N}((L^*L)^{\nu/2}).
	\]
	Substituting $u$ by $t u$ in the above inequality with $t>0$, we arrive at
	\[
		2t\inner{\udag,u} \leq  t^2 \gamma \norm{u}^2
		\quad\text{for every}\quad
		u \in \mathcal{N}((L^*L)^{\nu/2}),
		\; t > 0.
	\]
	Dividing by $t$ and letting $t$ go to $0$,
	this implies $\inner{\udag, u} = 0$ 
	whenever $u \in \mathcal{N}((L^*L)^{\nu/2})$.
	By the orthogonality of the decomposition \eqref{eq:Udec}
	we have
	$\udag \in \mathcal{R}( (L^*L)^{\nu/2} )$,
	which is \ref{en:general_ssc}.
\end{proof}
\begin{remark}
As in \autoref{ex:general}, one can also show that the \ssc\ and the \svi\ with the same parameter $\nu\in(0,2]$ are equivalent if $(L^*L)^{\frac\nu2}$ has closed range.
\end{remark}

\begin{example}
\label{ex:counter}
	Let $U$ be a real, separable Hilbert space
	with orthonormal basis $\{ \varphi_n \}_{n \in \N}$.
	We define the compact linear operator $L : U \to U$ by $L(\varphi_n) = 2^{-n} \varphi_n$.
	(Note that its range is not closed,
	for the range of a compact operator is closed
	if and only if it is finite-dimensional.)

	Then, for the data 
	\[ y = \sum_{n\ge1}2^{-\frac{3}2 n}\varphi_n\in\mathcal R(L), \]
	the problem \eqref{eq:op} fulfills the \hvi\ with the parameter $\nu=\frac12$, but not the \ssc\ with parameter $\nu=\frac12$.
	In particular, the two source conditions are not equivalent.

	However, the \ssc\ is fulfilled for every parameter $\nu<\frac12$.
\end{example}
\begin{proof}
	The minimum-norm solution $\udag$ can be directly calculated to be
	\begin{align}
	\label{e:udag-counter}
		\udag = L^{-1}y = \sum_{n \geq 1} 2^{-\frac{n}{2}} \varphi_n.
	\end{align}
	Now, since $L$ is self-adjoint by definition so that we have $(L^* L)^{\frac12} = L$, we see that $\udag\notin\mathcal R((L^*L)^{\frac14})$
	because $L^{-\frac{1}{2}} \udag = \sum_{n \geq 1} \varphi_n \notin U$.

	However, we have for every $\nu<\frac12$ that
	$L^{-\nu} \udag = \sum_{n\ge1}2^{n (\nu - \frac12)} \varphi_n$
	is in $U$, and therefore
	$\udag$ is in the range of $(L^* L)^{\frac\nu2}$ for every $\nu<\frac12$.
		
	For $u \in U$ arbitrary we write $u = \sum_{n \geq 1} 2^{\frac{n}{2}} \gamma_n \varphi_n$
	with some $\gamma_n \in \R$. Then
	\begin{equation*}
		\inner{\udag, u} = \sum_{n \geq 1} \gamma_n,
		\quad
		\norm{u}^2 = \sum_{n \geq 1} 2^n \abs{\gamma_n}^2,
		\quad\text{and}\quad
		\norm{L u}^2 = \sum_{n \geq 1} 2^{-n} \abs{\gamma_n}^2.
	\end{equation*}
	Now we can show that the \hvi\ with parameter $\nu=\frac12$ is fulfilled, more precisely, that we have
	\begin{equation}
	\label{eq:S}
	\inner{\udag, u} \leq 2\sqrt2 \norm{u}^{\frac{1}{2}} \norm{L u}^{\frac{1}{2}} \quad\text{for every}\quad u \in U.
	\end{equation}
	Indeed, set $S := \sum_{n \geq 1} |\gamma_n|$ and let $N\in\N$ be such that
	\[ \tfrac12 S \leq A := \sum_{n \leq N} |\gamma_n| \quad\text{and}\quad
		\tfrac12 S \leq B := \sum_{n \geq N} |\gamma_n| . \]
	Observe that,
	using the Cauchy--Schwarz inequality,
	\[ A^2 \leq \sum_{k \leq N} 2^{k} \sum_{n \leq N} 2^{-n} |\gamma_n|^2 \leq (2^{N+1}-1) \norm{L u}^2 , \]
	and
	\[ B^2 \leq \sum_{k \geq N} 2^{-k} \sum_{n \geq N} 2^{n} |\gamma_n|^2 \leq 2^{-N+1} \norm{u}^2 . \]
	Since we have by definition of $S$ that $\inner{\udag, u} \leq S$ and by the choice of $N$ that $S \leq 2 \sqrt{ A B }$,
	the inequality 
	\eqref{eq:S}
	follows.

	This proof is 
	largely from \cite{SE853764}.
	The proof of \autoref{th:neu_follow} below
	is a more elaborate version of
	the same idea.
\end{proof}

\begin{remark}
	In the above proof we noted that
	\begin{align}
	\label{eq:almostnu}
		\udag 
		\in \mathcal{R}((L^*L)^{\frac{\rho}{2}})
		\quad
		\text{for every}\quad \rho \in [0, \nu)
		.
	\end{align}
	This property is a general consequence of
	the variational source condition \eqref{eq:source_new_1a}.
	This follows from 
	\autoref{cor:convergence_rates}
	and
	\cite[Corollary 2.4]{Neu97}.

	However,
	if $\udag$ satisfies \eqref{eq:almostnu},
	it need not satisfy \eqref{eq:source_new_1a}:
	take $\udag$ as in \eqref{e:udag-counter}
	but with $L(\varphi_n) := n^{-2} 2^{-n} \varphi_n$.
	Then \eqref{eq:almostnu} holds for $\nu = 1/2$.
	But
	$\inner{ \udag, \varphi_n } = 2^{-n/2}$
	is not bounded
	in terms of
	$\norm{ L \varphi_n }^{1/2} \norm{ \varphi_n }^{1/2} = n^{-1} 2^{-n/2}$
	uniformly in $n \geq 1$.
\end{remark}

\section{Rates Results without Spectral Theory}
We briefly review the convergence rate results which follow from the introduced source conditions.

\begin{definition}
Let $U$ and $V$ be real Hilbert spaces, $L:U\to V$ be a bounded linear operator, and $y\in\mathcal R(L)$. Moreover, let $\udag$ denote the minimum-norm solution of the operator equation \eqref{eq:op}. 

Then, we say that the problem has
\begin{itemize}
\item\label{de:nfcr}
a noise-free convergence rate of order $\sigma$ if there exists a constant $C>0$ so that the regularized solution
\begin{equation}\label{eq:reg_sol_nf}
\ua = \argmin_{u\in U}\left(\norm{Lu-y}^2+\alpha\norm{u}^2\right),
\end{equation}
fulfills that
\[ \norm{\ua-\udag} \le C\alpha^\sigma\quad\text{for every}\quad\alpha>0, \]
\item\label{de:cr}
a convergence rate of order $\rho$ if there exists a constant $C>0$ so that the regularized solutions
\begin{equation}\label{eq:reg_sol}
\ua(\tilde y) = \argmin_{u\in U}\left(\norm{Lu-\tilde y}^2+\alpha\norm{u}^2\right),\quad\alpha>0,\;\tilde y\in V,
\end{equation}
fulfill for every $\delta>0$ the inequality
\begin{align}
\label{eq:noQdeltarate}
 \sup\set{\inf_{\alpha>0}\norm{\ua(\tilde y)-\udag}\;:\;\tilde y\in V,\;\norm{\tilde y-y}\le\delta}\le C\delta^\rho
 .
\end{align}
\end{itemize}
\end{definition}

The classical convergence results now state that if a problem \eqref{eq:op} fulfils the \ssc\ for some parameter $\nu\in(0,2]$, then it has a \ncr\ of order 
$\frac\nu{1+\nu}$, see \cite[Corollary 3.1.4]{Gro84}.
For $\nu,\mu \in (0,1]$ the same result can be obtained under the weaker source conditions~\eqref{eq:source_new_1a} and~\eqref{eq:source_new_hof}, see \cite{HeiHof09,SchKalHofKaz12}. 
The simple proof is added here for completeness.

\begin{lemma}
\label{le:rate}
Let $L : U \to V$ be a bounded linear operator between two real Hilbert spaces $U$ and $V$, and $y\in\mathcal R(L)$. 
Moreover, let $\udag$ denote the minimum-norm solution of the problem \eqref{eq:op} and assume that it fulfils the \ivi\ \eqref{eq:source_new_hof} for some parameters $\mu \in (0, 1]$, $\beta\ge0$, and $\gamma\in(0,1)$.

Then, for every choice of $\yd\in V$ with $\norm{\yd-y}\le\delta$ for some $\delta>0$ and every $\alpha>0$, the corresponding regularized solution
\[ \uad = \argmin_{u\in U}\left(\norm{Lu-\yd}^2+\alpha\norm{u}^2\right) \]
satisfies
\begin{equation}\label{eq:distance_to_sol}
\norm{\uad-\udag}^2\le \frac2{1-\gamma}\,\frac{\delta^2}\alpha+\frac{\beta^{\frac2{2-\mu}}(2-\mu)}{2(1-\gamma)}\,\alpha^{\frac\mu{2-\mu}}.
\end{equation}
\end{lemma}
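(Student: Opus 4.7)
The plan is to derive \eqref{eq:distance_to_sol} from the Tikhonov minimality inequality combined with the \ivi, using Young's inequality to separate the residual $\norm{L\uad-\yd}$ from the data noise level $\delta$ and the regularization parameter $\alpha$. First, I would exploit that $\uad$ minimizes the Tikhonov functional to conclude
\[
  \norm{L\uad-\yd}^2+\alpha\norm{\uad}^2 \le \norm{L\udag-\yd}^2+\alpha\norm{\udag}^2 \le \delta^2+\alpha\norm{\udag}^2,
\]
and then use the algebraic identity $\norm{\uad}^2-\norm{\udag}^2=\norm{\uad-\udag}^2-2\inner{\udag,\udag-\uad}$ to rearrange this into
\[
  \norm{L\uad-\yd}^2+\alpha\norm{\uad-\udag}^2 \le \delta^2+2\alpha\inner{\udag,\udag-\uad}.
\]

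Next, I would insert the \ivi\ \eqref{eq:source_new_hof} tested against $u=\udag-\uad$, obtaining
\[
  \norm{L\uad-\yd}^2+(1-\gamma)\alpha\norm{\uad-\udag}^2 \le \delta^2+\alpha\beta\norm{L(\udag-\uad)}^{\mu}.
\]
The core of the proof is to absorb the remaining power term $\alpha\beta\norm{L(\udag-\uad)}^{\mu}$. I would apply Young's inequality with the conjugate exponents $p=2/\mu$ and $q=2/(2-\mu)$ to the product $(\alpha\beta)\cdot\norm{L(\udag-\uad)}^{\mu}$, yielding
\[
  \alpha\beta\norm{L(\udag-\uad)}^{\mu} \le \tfrac{\mu}{2}\norm{L(\udag-\uad)}^2+\tfrac{2-\mu}{2}(\alpha\beta)^{\frac{2}{2-\mu}},
\]
and then bound $\norm{L(\udag-\uad)}^2\le 2\delta^2+2\norm{L\uad-\yd}^2$ by the triangle inequality.

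Substituting these estimates back, the terms $\mu\norm{L\uad-\yd}^2$ on the right can be absorbed by $\norm{L\uad-\yd}^2$ on the left (leaving the nonnegative coefficient $1-\mu$, which one discards), leading to
\[
  (1-\gamma)\alpha\norm{\uad-\udag}^2 \le (1+\mu)\delta^2+\tfrac{2-\mu}{2}\beta^{\frac{2}{2-\mu}}\alpha^{\frac{2}{2-\mu}}.
\]
Dividing by $(1-\gamma)\alpha$, using $\frac{2}{2-\mu}-1=\frac{\mu}{2-\mu}$, and bounding $1+\mu\le 2$ produces exactly \eqref{eq:distance_to_sol}. The principal technical point, and the only place where the particular form of the exponents in the stated bound is dictated, is the choice of the Young conjugate pair $(p,q)=(2/\mu,2/(2-\mu))$, which is forced by the requirement that the residual $\norm{L(\udag-\uad)}^2$ pair naturally with the squared discrepancy on the left-hand side.
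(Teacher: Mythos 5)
Your proposal is correct and follows essentially the same route as the paper's proof: the Tikhonov minimality inequality, the algebraic rearrangement to bring in $2\alpha\inner{\udag,\udag-\uad}$, the \ivi, the triangle-inequality bound linking $\norm{L(\uad-\udag)}^2$ and $\norm{L\uad-\yd}^2$, and Young's inequality with conjugate exponents $\bigl(\tfrac2\mu,\tfrac2{2-\mu}\bigr)$. The only difference is the order in which the last two steps are applied (the paper converts the residual first and then absorbs $\tfrac\mu2\norm{L(\uad-\udag)}^2$ on the left, while you absorb into $\norm{L\uad-\yd}^2$ after Young), which is immaterial and yields the same constants after bounding $1+\mu\le2$.
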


\begin{proof}
From the definition of the minimizer $\uad$, it follows that
\[ \norm{L \uad-\yd}^2 +\alpha \norm{\uad}^2 \leq \delta^2 +\alpha \norm{\udag}^2. \]
This inequality together with the variation inequality \eqref{eq:source_new_hof} yields
\begin{align*}
\norm{L \uad-\yd}^2 +\alpha \norm{\uad-\udag}^2 
&\le \delta^2 + 2 \alpha \inner{\udag,\udag-\uad} \\
&\le \delta^2+\alpha\beta\norm{L (\uad -\udag)}^\mu+\alpha\gamma\norm{\uad -\udag}^2.
\end{align*}
Now, observing that
\[ \frac12 \norm{L(\uad - \udag)}^2 - \delta^2 \leq \norm{L \uad -\yd}^2, \]
which is a consequence of the triangle inequality and the fact that $a \leq b+c$ implies $a^2 \leq 2(b^2+c^2)$,
we further find that
\[ \frac12 \norm{L(\uad - \udag)}^2 + \alpha(1-\gamma)\norm{\uad-\udag}^2 \le 2\delta^2 + \alpha\beta\norm{L (\uad -\udag)}^\mu. \]
Applying then Young's inequality to the last term, we end up with
\[ \frac12 \norm{L(\uad - \udag)}^2 + \alpha(1-\gamma)\norm{\uad-\udag}^2 \le 2\delta^2 + \frac{2-\mu}2(\alpha\beta)^{\frac2{2-\mu}}+\frac\mu2\norm{L (\uad -\udag)}^2, \]
which in particular implies \eqref{eq:distance_to_sol}.
\end{proof}

\begin{proposition}\label{cor:convergence_rates}
Assume that $L: U \to V$ is a bounded linear operator between two real Hilbert spaces $U$ and $V$, and $y\in\mathcal R(L)$. 

Then, if the problem \eqref{eq:op} fulfills the \ivi\ with the parameter $\mu=\frac{2\nu}{1+\nu}$ for some $\nu\in(0,1]$, it has
\begin{enumerate}
\item
a \nfcr\ of order $\frac\nu2$ and
\item
a \ncr\ of order $\frac\nu{1+\nu}$.
\end{enumerate}
\end{proposition}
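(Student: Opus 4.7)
The plan is to apply \autoref{le:rate} directly with $\mu=\frac{2\nu}{1+\nu}$ and then balance the two terms on the right-hand side of the resulting estimate. Since the \ivi\ is valid with any $\gamma\in[0,1)$, we may in particular pick $\gamma\in(0,1)$ so that the hypotheses of \autoref{le:rate} are met.

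The first step is the arithmetic observation
\[
 2-\mu = 2 - \frac{2\nu}{1+\nu} = \frac{2}{1+\nu}, \qquad \frac{\mu}{2-\mu} = \nu,
\]
so that the bound \eqref{eq:distance_to_sol} of \autoref{le:rate} simplifies to
\[
 \norm{\uad-\udag}^2 \le \frac{2}{1-\gamma}\,\frac{\delta^2}{\alpha} + c_\beta\,\alpha^{\nu}
\]
for every $\alpha>0$ and every $\yd\in V$ with $\norm{\yd-y}\le\delta$, where $c_\beta>0$ depends only on $\beta$, $\gamma$, and $\nu$.

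For part \textit{(i)} I would observe that the noise-free minimizer $\ua$ defined in \eqref{eq:reg_sol_nf} coincides with $\uad$ when $\yd=y$, so we may apply the above estimate with $\delta=0$. This yields $\norm{\ua-\udag}^2 \le c_\beta\,\alpha^{\nu}$, hence the claimed \nfcr\ of order $\frac{\nu}{2}$ with constant $C = c_\beta^{1/2}$.

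For part \textit{(ii)} I would balance the two summands by choosing $\alpha=\alpha(\delta) = \delta^{2/(1+\nu)}$ (any constant multiple works; the precise choice that minimizes the right-hand side can be read off by differentiating in $\alpha$). Then $\delta^2/\alpha = \delta^{2\nu/(1+\nu)}$ and $\alpha^\nu = \delta^{2\nu/(1+\nu)}$ are of the same order, so the estimate becomes $\norm{\ua(\yd)-\udag}^2 \le C'\delta^{2\nu/(1+\nu)}$ uniformly in $\yd$ with $\norm{\yd-y}\le\delta$. Taking the square root and then the infimum over $\alpha>0$ (dominated by our specific choice) and the supremum over admissible $\yd$ in \eqref{eq:noQdeltarate} gives the \ncr\ of order $\frac{\nu}{1+\nu}$. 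There is no real obstacle here: the work has essentially been done in \autoref{le:rate}, and only the exponent bookkeeping and the $\alpha$-balancing remain.
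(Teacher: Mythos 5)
Your proposal is correct and follows essentially the same route as the paper: both parts are obtained by invoking \autoref{le:rate}, with $\delta=0$ for the noise-free rate and with the balancing choice $\alpha=\delta^{2-\mu}=\delta^{2/(1+\nu)}$ for the noisy rate. Your extra remark that one may always enlarge $\gamma$ to lie in $(0,1)$ is a small point the paper leaves implicit, but otherwise the two arguments coincide.
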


\begin{proof}
Let $\udag$ be the minimal-norm solution of \eqref{eq:op}.
\begin{enumerate}
\item
In the noise free case, \autoref{le:rate} with $\delta=0$ and $\mu=\frac{2\nu}{1+\nu}$ directly implies for the regularized solution $\ua$ defined by \eqref{eq:reg_sol_nf} the inequality
\[ \norm{\ua-\udag} \le C\alpha^{\frac\mu{2(2-\mu)}} = C\alpha^{\frac\nu2}\quad\text{for all}\quad\alpha>0 \]
for some constant $C>0$.
\item
In the noisy case, \autoref{le:rate} yields for arbitrary $\delta>0$ and data $\tilde y\in V$ with $\norm{\tilde y-y}\le\delta$ the inequality
\begin{equation}\label{eq:deltarate}
\inf_{\alpha>0}\norm{\ua(\tilde y)-\udag}^2 \le \norm{u_{\delta^{2-\mu}}(\tilde y)-\udag}^2 \le C\delta^\mu = C\delta^{\frac{2\nu}{1+\nu}}
\end{equation}
for some constant $C>0$. Here, $\ua(\tilde y)$ denotes the regularized solution~\eqref{eq:reg_sol}.
\end{enumerate}
\end{proof}

\begin{remark}
Because of \autoref{l:first}, the \hvi\ with a parameter $\nu\in(0,1]$ therefore also implies a \nfcr\ of order~$\frac\nu2$ and a \ncr\ of order~$\frac\nu{1+\nu}$.
\end{remark}

\section{On converse results of Neubauer}
In this section we go deeper into the results of Neubauer \cite{Neu97}.
In the Hilbert space setting, Neubauer characterized the minimum-norm solution for 
which the problem has a convergence rate of order $\frac\nu{\nu+1}$ for some $\nu\in(0,2)$
in terms of its spectral tail.
(Note that Neubauer writes $2 \nu$ where we write $\nu$.)

\begin{definition}\label{de:scr}
Let $U$ and $V$ be real Hilbert spaces,	$L:U\to V$ be a bounded linear operator, and $y\in\mathcal R(L)$. We say that the minimum-norm solution $\udag$ of the problem \eqref{eq:op} has spectral tail of order $\nu$ if there exists a constant $C>0$ so that
\begin{equation}\label{eq:NeuT21}
\norm{E_{[0,\lambda]}\udag}^2 \le C^2\lambda^\nu\quad\text{for all}\quad \lambda\ge0,
\end{equation}
where $A\mapsto E_A$ denotes the (projection-valued) spectral measure of the operator~$L^*L$.
\end{definition}

\begin{proposition}
\label{th:neu}
	Let $L:U\to V$ be a bounded linear operator between two real Hilbert spaces $U$ and $V$, and $y\in\mathcal R(L)$.

	Then, for every $\nu\in(0,2)$, it is equivalent for the problem \eqref{eq:op} that
	\begin{enumerate}
	\item\label{en:neu_nfcr}
		it has a \nfcr\ of order $\frac\nu2$,
	\item\label{en:neu_ncr}
		it has a \ncr\ of order $\frac\nu{\nu+1}$, and
	\item\label{en:neu_scr}
		its minimum-norm solution has a \scr\ of order $\nu$.
	\end{enumerate}
\end{proposition}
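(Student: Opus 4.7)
The strategy is to close the cycle \ref{en:neu_scr}$\Rightarrow$\ref{en:neu_nfcr}$\Rightarrow$\ref{en:neu_ncr}$\Rightarrow$\ref{en:neu_scr}, working throughout with the spectral calculus of $L^*L$. Write $F(\lambda):=\norm{E_{[0,\lambda]}\udag}^2$; since $\udag\in\overline{\mathcal R(L^*)}=\mathcal N(L)^\perp$ and $\mathcal N(L)=\mathcal N(L^*L)$, we have $F(0)=0$. The identity $\ua-\udag = -\alpha (L^*L+\alpha I)^{-1}\udag$ together with the functional calculus yields the central representation
\[
\norm{\ua-\udag}^2 = \int_0^\infty \frac{\alpha^2}{(\lambda+\alpha)^2}\,dF(\lambda),
\]
which is the common thread for all three implications.

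For \ref{en:neu_scr}$\Rightarrow$\ref{en:neu_nfcr}, I would integrate the above display by parts; the boundary term at infinity vanishes because $\nu<2$ forces $F(\lambda)/\lambda^2\to 0$. This gives
\[
\norm{\ua-\udag}^2 = 2\alpha^2\int_0^\infty \frac{F(\lambda)}{(\lambda+\alpha)^3}\,d\lambda \le 2C^2\alpha^\nu \int_0^\infty \frac{t^\nu}{(t+1)^3}\,dt
\]
after the substitution $\lambda=\alpha t$, and the remaining integral converges precisely when $-1<\nu<2$.

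For \ref{en:neu_nfcr}$\Rightarrow$\ref{en:neu_ncr}, decompose $\ua(\tilde y)-\udag = (\ua-\udag) + (L^*L+\alpha I)^{-1}L^*(\tilde y-y)$ and apply the standard estimate $\norm{(L^*L+\alpha I)^{-1}L^*}\le (2\sqrt\alpha)^{-1}$ (from $\sup_{\lambda\ge 0}\sqrt\lambda/(\lambda+\alpha) = (2\sqrt\alpha)^{-1}$) to obtain
\[
\norm{\ua(\tilde y)-\udag}\le C\alpha^{\nu/2} + \frac{\delta}{2\sqrt\alpha}.
\]
Inserting the choice $\alpha\sim \delta^{2/(\nu+1)}$ into the infimum in the definition of the \ncr\ then balances both terms and produces an $\mathcal O(\delta^{\nu/(\nu+1)})$ bound uniformly in $\tilde y$.

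The main obstacle is \ref{en:neu_ncr}$\Rightarrow$\ref{en:neu_scr}, which requires an adversarial construction. For each $\lambda>0$ I would take $\tilde y_\lambda := y - L E_{[0,\lambda]}\udag\in V$, whose noise level satisfies $\norm{\tilde y_\lambda-y}^2 = \int_0^\lambda s\,dF(s) \le \lambda F(\lambda)$. Since $L^*y = L^*L\udag$, a direct spectral calculation gives
\[
\ua(\tilde y_\lambda)-\udag = -E_{[0,\lambda]}\udag - \int_\lambda^\infty \frac{\alpha}{s+\alpha}\,dE_s\udag,
\]
whose two summands live on complementary spectral sets and are therefore orthogonal; hence $\norm{\ua(\tilde y_\lambda)-\udag}^2\ge F(\lambda)$ for every $\alpha>0$, and this bound survives the infimum over $\alpha$. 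Feeding this into \ref{en:neu_ncr} with $\delta = \norm{\tilde y_\lambda-y}$ yields
\[
F(\lambda)^{1/2} \le C\bigl(\lambda F(\lambda)\bigr)^{\nu/(2(\nu+1))}.
\]
Collecting powers of $F(\lambda)$ on the left using $1/2-\nu/(2(\nu+1)) = 1/(2(\nu+1))$ and raising to the $2(\nu+1)$-th power gives $F(\lambda)\le C^{2(\nu+1)}\lambda^\nu$, which is \ref{en:neu_scr}. The cleverness of this step is the choice of $\tilde y_\lambda$: it is precisely the datum for which Tikhonov regularization provably cannot recover the low-frequency component $E_{[0,\lambda]}\udag$, turning an upper bound on worst-case reconstruction error into a pointwise bound on the spectral tail.
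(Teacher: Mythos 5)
Your proof is correct, but it takes a genuinely different route from the paper's. The paper treats Neubauer's results as black boxes: it cites \cite[Theorem~2.1]{Neu97} for the equivalence of \textit{(i)} and \textit{(iii)} and \cite[Theorem~2.6]{Neu97} for the equivalence of \textit{(iii)} with a rate condition in which the noise is measured as $\norm{Q(\tilde y-y)}\le\delta$, $Q$ being the orthogonal projection onto $\overline{\mathcal R(L)}$; the only argument actually carried out in the paper is the reconciliation of that projected-noise rate with the rate \eqref{eq:noQdeltarate}, via the observation that $L^*=L^*Q$ forces $\ua(\tilde y)=\ua(y+Q(\tilde y-y))$. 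You instead reprove the whole cycle from the spectral representation $\norm{\ua-\udag}^2=\int_0^\infty\alpha^2(\lambda+\alpha)^{-2}\,dF(\lambda)$: the integration by parts for \textit{(iii)}$\Rightarrow$\textit{(i)} (where $F(0)=0$ and boundedness of $F$ already kill both boundary terms, so the appeal to $\nu<2$ there is superfluous --- that hypothesis is only needed for convergence of $\int_0^\infty t^\nu(t+1)^{-3}\,dt$), the standard splitting plus $\alpha\sim\delta^{2/(\nu+1)}$ for \textit{(i)}$\Rightarrow$\textit{(ii)}, and the adversarial datum $\tilde y_\lambda=y-LE_{[0,\lambda]}\udag$ for the converse \textit{(ii)}$\Rightarrow$\textit{(iii)}, which is in substance Neubauer's own construction. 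What your route buys is self-containedness and transparency of the converse direction; it also sidesteps the projected-versus-unprojected noise issue entirely, since your perturbation $\tilde y_\lambda-y=-LE_{[0,\lambda]}\udag$ already lies in $\mathcal R(L)$, so $Q$ acts as the identity on it. What the paper's route buys is brevity and an explicit record of exactly which part of the statement is not already in \cite{Neu97}. Two cosmetic points: you should note that if $\norm{\tilde y_\lambda-y}=0$ then $\int_{[0,\lambda]}s\,dF(s)=0$ together with $F(0)=0$ gives $F(\lambda)=0$ directly, so the division step is never vacuous; and the Stieltjes integration by parts should be justified for a merely right-continuous monotone $F$ (e.g.\ by Fubini applied to $g(\lambda)=\int_\lambda^\infty(-g'(s))\,ds$), which yields $\int g\,dF\le\int_0^\infty(-g'(s))F(s)\,ds$ as you need.
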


\begin{proof}
	Neubauer showed in \cite[Theorem 2.1]{Neu97} that the condition \ref{en:neu_nfcr} is equivalent to \ref{en:neu_scr},
	and proved in \cite[Theorem 2.6]{Neu97} that \ref{en:neu_scr} is equivalent to the fact that there exists a constant $C\ge0$ so that
	\begin{equation}
	\label{eq:Qdeltarate}
		\sup\left\{\inf_{\alpha>0}\norm{\ua(\tilde{y}) - \udag}\; :\;\tilde  y\in V,\;\norm{Q(\tilde{y}-y)} \leq \delta\right\} \le C\delta^{\frac{\nu}{\nu+1}}
	\end{equation}
	for every $\delta\ge0$, where $Q$ denotes the orthogonal projection onto the range $\overline{\mathcal R(L)}$ and the regularized solution $u_\alpha(\tilde y)$ is defined by \eqref{eq:reg_sol}.
	
	It therefore only remains to show that \eqref{eq:Qdeltarate} is equivalent to a \ncr\ of order $\frac\nu{\nu+1}$.
	
	It is clear that \eqref{eq:Qdeltarate} implies such a \ncr, since the supremum in the definition \eqref{eq:noQdeltarate} of the \ncr\ is taken over a smaller set than in \eqref{eq:Qdeltarate}.

	For the other direction, we define for arbitrary $\tilde{y}\in V$ with 
	$\norm{ Q ( \tilde{y} - y ) } \leq \delta$,
	the element
	$\hat{y} := y + Q ( \tilde{y} - y )$.
	Then,
	\[ \norm{ \hat{y} - y } = \norm{ Q( \tilde{y} - y ) } \leq \delta, \]
	and the optimality conditions for the regularized solutions $\ua(\tilde y)$ and $\ua(\hat y)$ yield
	\[ \ua(\tilde{y}) - \ua(\hat{y}) = (\alpha I + L^* L)^{-1} L^* (\tilde{y} - \hat{y}). \]
	Since now $L^* = L^* Q$, we have 
	$L^* (\tilde{y} - \hat{y}) = L^* Q (\tilde{y} - \hat{y}) = 0$
	and therefore, $\ua(\tilde{y}) = \ua(\hat{y})$.
	As $\tilde{y}$ was arbitrary subject to $\norm{ Q ( \tilde{y} - y ) } \leq \delta$,
	condition \eqref{eq:noQdeltarate} with $\rho=\frac\nu{\nu+1}$ implies \eqref{eq:Qdeltarate}.
\end{proof}

Neubauer \cite{Neu97} also gave a counterexample to show that the \ssc\ with parameter $\nu\in(0,2)$, which implies the three equivalent conditions of \autoref{th:neu}, is not equivalent to them, 
see also \autoref{ex:counter}.

However, we will show in the following that the \hvi\ with parameter $\nu\in(0,1)$ and the \ivi\ with parameter $\mu=\frac{2\nu}{1+\nu}$ are indeed equivalent to the conditions of \autoref{th:neu}.

\begin{proposition}
\label{th:neu_follow}
	Let $L:U\to V$ be a bounded linear operator between two real Hilbert spaces $U$ and $V$, and let $y\in\mathcal R(L)$.

	Then for arbitrary $\nu\in(0,2)$ and $\rho>\nu$ the conditions that 
	\begin{enumerate}
	\item\label{en:neu_follow_spectral_tail}
		the maximum-norm solution $\udag$ of the problem \eqref{eq:op} has a \scr\ of order $\nu$ and
	\item\label{en:neu_follow_variational}
		there exists a constant $\beta\ge0$ so that
		\begin{equation}\label{eq:neu_follow_variational}
			2\inner{\udag,u} \le \beta\norm{(L^*L)^{\frac\rho2}u}^{\frac\nu\rho}\norm{u}^{1-\frac\nu\rho}\quad\text{for all}\quad u\in U
		\end{equation}
	\end{enumerate}
	are equivalent.
\end{proposition}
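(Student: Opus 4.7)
The plan is to prove both implications using spectral calculus with the spectral resolution $\{E_\lambda\}_{\lambda \ge 0}$ of $L^*L$. The easy direction \ref{en:neu_follow_variational} $\Rightarrow$ \ref{en:neu_follow_spectral_tail} follows by testing the variational inequality \eqref{eq:neu_follow_variational} with the choice $u = E_{[0,\lambda]}\udag$. Since $E_{[0,\lambda]}$ is a self-adjoint projection, the left hand side equals $2\norm{u}^2$. On the right hand side, the spectrum of $L^*L$ restricted to the range of $E_{[0,\lambda]}$ lies in $[0,\lambda]$, so by spectral calculus $\norm{(L^*L)^{\rho/2}u} \le \lambda^{\rho/2}\norm{u}$, and the right hand side simplifies to $\beta\lambda^{\nu/2}\norm{u}$. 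Dividing by $\norm{u}$ yields $\norm{E_{[0,\lambda]}\udag} \le (\beta/2)\lambda^{\nu/2}$, which is \eqref{eq:NeuT21} with $C=\beta/2$. Note that this argument does not use the hypothesis $\rho>\nu$.

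For the converse \ref{en:neu_follow_spectral_tail} $\Rightarrow$ \ref{en:neu_follow_variational}, I would adapt the splitting technique from the proof of \autoref{ex:counter}: for a parameter $\lambda > 0$ to be chosen later, decompose $\udag = E_{[0,\lambda]}\udag + E_{(\lambda,\infty)}\udag$ and estimate the contributions to $\inner{\udag, u}$ separately. The low-frequency part is immediate from Cauchy--Schwarz and \eqref{eq:NeuT21}:
\[
    \abs{\inner{E_{[0,\lambda]}\udag, u}} \le \norm{E_{[0,\lambda]}\udag}\,\norm{u} \le C\lambda^{\nu/2}\norm{u}.
\]
For the high-frequency part, I would factor through $(L^*L)^{\rho/2}$, which is boundedly invertible on the invariant subspace $E_{(\lambda,\infty)}U$, so that
\[
    \inner{E_{(\lambda,\infty)}\udag, u} = \inner{(L^*L)^{-\rho/2}E_{(\lambda,\infty)}\udag,\;(L^*L)^{\rho/2}u}.
\]
Cauchy--Schwarz together with the spectral evaluation
\[
    \norm{(L^*L)^{-\rho/2}E_{(\lambda,\infty)}\udag}^2 = \int_\lambda^{\norm{L}^2} t^{-\rho}\,\d\norm{E_{[0,t]}\udag}^2
\]
then bounds the high-frequency piece by a constant multiple of $\lambda^{(\nu-\rho)/2}\norm{(L^*L)^{\rho/2}u}$: the spectral integral is handled by integration by parts, substituting the spectral tail bound \eqref{eq:NeuT21} into both the boundary terms and the remaining integral. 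Adding the two contributions and minimising the resulting expression $A\lambda^{\nu/2}+B\lambda^{(\nu-\rho)/2}$ over $\lambda > 0$ (the optimum lies at $\lambda \sim (B/A)^{2/\rho}$) yields a bound of the form $\text{const}\cdot A^{1-\nu/\rho}B^{\nu/\rho}$, which is precisely the right hand side of \eqref{eq:neu_follow_variational}.

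The main obstacle is the bookkeeping around the integration by parts in the high-frequency estimate: one has to check that the boundary term at $t=\norm{L}^2$ is harmless (being controlled by $\norm{\udag}^2$) and that the hypothesis $\rho>\nu$ ensures the remaining integral $\int_\lambda^\infty t^{\nu-\rho-1}\,\d t$ is finite and of the claimed order $\lambda^{\nu-\rho}$; it is exactly at this step that $\rho > \nu$ is used essentially. Everything else reduces to routine Cauchy--Schwarz and a Young-type optimisation in $\lambda$, which is the same mechanism that was made explicit in the concrete computation of \autoref{ex:counter}.
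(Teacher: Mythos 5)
Your proposal is correct and takes essentially the same route as the paper: the direction \ref{en:neu_follow_variational}$\,\Rightarrow\,$\ref{en:neu_follow_spectral_tail} is exactly the paper's test with $u=E_{[0,\lambda]}\udag$, and the converse uses the same low/high spectral splitting, the same weighted Cauchy--Schwarz through $(L^*L)^{\frac\rho2}$ (the paper's \autoref{l:Lambda}) and the same integration-by-parts tail estimate where $\rho>\nu$ enters (the paper's \autoref{l:reverse}). The only cosmetic difference is that you minimise $A\lambda^{\frac\nu2}+B\lambda^{\frac{\nu-\rho}2}$ over the cut-off $\lambda$, whereas the paper fixes $\Lambda$ at the median of the variation measure $|\mu_{\udag,u}|$ so that the interpolation $\min(A_\Lambda,B_\Lambda)\le A_\Lambda^{1-\nu/\rho}B_\Lambda^{\nu/\rho}$ makes $\Lambda$ cancel; both yield the same bound up to a constant.
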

\begin{proof}
	We first show that \ref{en:neu_follow_spectral_tail} implies \ref{en:neu_follow_variational}.

	Let $A\mapsto E_A$ denote the (projection-valued) spectral measure of $L^*L$. For arbitrary $u\in U$, we define the signed measure $A\mapsto\mu_{\udag,u}(A)=\inner{E_A\udag,u}$ 
	and set for $\lambda \in [0,\infty]$
	\begin{align}
		A_\lambda := |\mu_{\udag,u}|([0,\lambda])
		\quad\text{and}\quad
		B_\lambda := |\mu_{\udag,u}|([\lambda,\infty)),
	\end{align}
	where $|\mu_{\udag,u}|$ denotes the variation of the measure $\mu_{\udag,u}$.

	Let now $\Lambda := \inf \{ \lambda \geq 0 : A_\lambda \geq \frac12 A_\infty \}$.
	Then, since $\lambda \mapsto A_\lambda$ is right-continuous, there holds $A_\Lambda \geq \frac12 A_\infty$. 
	Moreover, since $\Lambda$ is minimal and $\lambda\mapsto B_\lambda$ is left-continuous, it also follows that $B_\Lambda \geq \frac12 A_\infty$.

	We now estimate $A_\Lambda$ with the inequality \eqref{eq:Lambda} with $T=L^*L$ and $\rho=0$, which yields
	\[ A_\Lambda = |\mu_{\udag,u}|([0,\Lambda]) \le \norm{E_{[0,\Lambda]}\udag}\norm{u}. \]
	If the \scr\ of $\udag$ has order $\nu$, we have a constant $C>0$ so that $\norm{E_{[0,\lambda]}\udag}\le C\lambda^{\frac\nu2}$ and thus
	\begin{equation}
	\label{eq:C_estimate}
	A_\Lambda \le C\norm{u}\Lambda^{\frac\nu2}.
	\end{equation}
	For $B_\Lambda$, we also use the inequality \eqref{eq:Lambda} with $T=L^*L$, and get for arbitrary $\rho\in\R$,  the upper bound
	\[ B_\Lambda = |\mu_{\udag,u}|([\Lambda,\infty)) \le \norm{(L^*L)^{\frac\rho2}u}\left(\int_{[\Lambda,\infty)}\frac1{\lambda^\rho}\d\mu_{\udag,\udag}(\lambda)\right)^{\frac12}. \]
	Choosing now $\rho>\nu$, we can estimate the integral with \eqref{eq:est_integral} (using that the measure $\mu_{\udag,\udag}$ satisfies $\mu_{\udag,\udag}([0,\lambda])=\norm{E_{[0,\lambda]}\udag}^2\le C^2\lambda^\nu$) 
	and find 
	\[ B_\Lambda \le \frac C{\sqrt{1-\frac\nu\rho}}\norm{(L^*L)^{\frac\rho2}u}\Lambda^{\frac{\nu-\rho}2}. \]

	Therefore, recalling that $\Lambda$ was chosen so that $A_\Lambda\ge\frac12A_\infty$ and $B_\Lambda\ge\frac12A_\infty$, we have
	\[ 2\inner{\udag,u} \le 2A_\infty \le 4A_\Lambda^{1-\frac\nu\rho}B_\Lambda^{\frac\nu\rho} \le \frac{4C}{(1-\frac\nu\rho)^{\frac\nu{2\rho}}}\norm{u}^{1-\frac\nu\rho}\norm{(L^*L)^{\frac\rho2}u}^{\frac\nu\rho}, \]
	which is the condition \ref{en:neu_follow_variational}.

	For the other direction, we remark that from the inequality \eqref{eq:neu_follow_variational} for some constant $\beta\ge0$, we find for every $\lambda\ge0$ that
	\[ \norm{E_{[0,\lambda]}\udag}^2 = \inner{E_{[0,\lambda]}\udag,\udag} \le \frac\beta2\norm{(L^*L)^{\frac\rho2}E_{[0,\lambda]}\udag}^{\frac\nu\rho}\norm{E_{[0,\lambda]}\udag}^{1-\frac\nu\rho}. \]
	Now, since $E$ is the spectral measure of $L^*L$, we have that $\norm{(L^*L)^{\frac\rho2}E_{[0,\lambda]}\udag} \le \lambda^{\frac\rho2}\norm{E_{[0,\lambda]}\udag}$, 
	see for example \cite[Chapter~X.2.9, Corollary~9]{DunSchw63},
	and we therefore obtain that 
	\[ \norm{E_{[0,\lambda]}\udag}^2 \le \frac\beta2\lambda^{\frac\nu2}\norm{E_{[0,\lambda]}\udag}, \]
	which concludes the proof.
\end{proof}

\begin{remark}
In fact, it can be seen from this proof that condition \ref{en:neu_follow_variational} in \autoref{th:neu_follow} also implies condition \ref{en:neu_follow_spectral_tail} in the case $\rho=\nu=2$, which corresponds to the result that the \ssc\ for $\nu=2$ yields a \nfcr\ of order $1$.
\end{remark}

Finally, we can summarize all the statements in an equivalence result between the different source conditions and convergence rates.

\begin{theorem}
\label{cor:neu}
	Let $L : U \to V$ be a bounded linear operator between two real Hilbert spaces $U$ and $V$ and $y\in\mathcal R(L)$.

	Then, for every $\nu\in(0,2)$, it is equivalent for the problem \eqref{eq:op} that
	\begin{enumerate}
	\item\label{en:main_svi}
		it fulfils the \svi\ with parameter $\nu$,
	\item\label{en:main_nfcr}
		it has a \nfcr\ of order $\frac\nu2$,
	\item\label{en:main_ncr}
		it has a \ncr\ of order $\frac\nu{\nu+1}$, 
	\item\label{en:main_scr}
		its minimum-norm solution $\udag$ has a \scr\ of order $\nu$,
	\end{enumerate}
	and if $\nu\in(0,1)$ these are additionally equivalent to 
	\begin{enumerate}
	\addtocounter{enumi}{4}
	\item\label{en:main_hvi}
		the \hvi\ with parameter $\nu$ and
	\item\label{en:main_ivi}
		the \ivi\ with parameter $\mu=\frac{2\nu}{1+\nu}$.
	\end{enumerate}
\end{theorem}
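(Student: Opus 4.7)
The plan is to obtain \autoref{cor:neu} as an assembly of the results already established in the preceding sections, without any fresh analysis. The core observation is that \autoref{th:neu_follow} is powerful enough to absorb both the \svi\ (by choosing $\rho = 2$) and the \hvi\ (by choosing $\rho = 1$) as instances of the same umbrella variational inequality~\eqref{eq:neu_follow_variational}, whose equivalence with the spectral tail \ref{en:main_scr} holds for any $\rho > \nu$.

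First, I would invoke \autoref{th:neu} to record the equivalence \ref{en:main_nfcr}$\Leftrightarrow$\ref{en:main_ncr}$\Leftrightarrow$\ref{en:main_scr} for all $\nu\in(0,2)$. Next, to link \ref{en:main_svi} with the triple just obtained, I would apply \autoref{th:neu_follow} with $\rho = 2$: the hypothesis $\rho>\nu$ is met for every $\nu\in(0,2)$, and since $\|(L^*L)^{\rho/2}u\|^{\nu/\rho}\|u\|^{1-\nu/\rho} = \|L^*Lu\|^{\nu/2}\|u\|^{1-\nu/2}$ for $\rho=2$, the variational inequality~\eqref{eq:neu_follow_variational} is nothing other than the \svi\ from \eqref{eq:source_symmetrized}. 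This yields \ref{en:main_svi}$\Leftrightarrow$\ref{en:main_scr} and hence closes the four-way equivalence for $\nu\in(0,2)$.

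For the additional statements that appear in the restricted range $\nu\in(0,1)$, I would again turn to \autoref{th:neu_follow}, this time with $\rho = 1$, which is admissible because $\nu<1$. Since $\|(L^*L)^{1/2}u\| = \|Lu\|$, the general inequality~\eqref{eq:neu_follow_variational} collapses to the \hvi\ \eqref{eq:source_new_1a} with parameter $\nu$, giving the equivalence \ref{en:main_hvi}$\Leftrightarrow$\ref{en:main_scr}. To incorporate \ref{en:main_ivi} it then suffices to close a short cycle: \autoref{l:first}\ref{en:hvi_ivi} supplies \ref{en:main_hvi}$\Rightarrow$\ref{en:main_ivi}, and \autoref{cor:convergence_rates} supplies \ref{en:main_ivi}$\Rightarrow$\ref{en:main_nfcr}. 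Since \ref{en:main_nfcr} is already known to be equivalent to \ref{en:main_hvi}, the cycle closes and all six conditions are equivalent for $\nu\in(0,1)$.

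There is no real obstacle here — the heavy lifting has been done in \autoref{th:neu_follow} and \autoref{th:neu}. The only points that require care are the parameter identifications (matching \eqref{eq:neu_follow_variational} with $\rho\in\{1,2\}$ against the \hvi\ and the \svi, respectively) and verifying that the ranges of $\nu$ are compatible with the requirement $\rho>\nu$ of \autoref{th:neu_follow}; both are transparent. The proof therefore reduces to a short bookkeeping argument and needs no explicit spectral estimates of its own.
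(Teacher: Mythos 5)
Your proposal is correct and follows essentially the same route as the paper's own proof: the three-way equivalence \ref{en:main_nfcr}$\Leftrightarrow$\ref{en:main_ncr}$\Leftrightarrow$\ref{en:main_scr} comes from \autoref{th:neu}, the \svi\ and \hvi\ are tied to the \scr\ via \autoref{th:neu_follow} with $\rho=2$ and $\rho=1$ respectively, and the \ivi\ is absorbed by closing the cycle through \autoref{l:first}\ref{en:hvi_ivi} and \autoref{cor:convergence_rates}. The parameter identifications and the check that $\rho>\nu$ holds in each case are exactly the bookkeeping the paper performs.
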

\begin{proof}
We already know from \autoref{th:neu} that \ref{en:main_nfcr}, \ref{en:main_ncr}, and \ref{en:main_scr} are equivalent conditions. Moreover, we know from \autoref{l:first} that \ref{en:main_hvi} implies \ref{en:main_ivi}, and from \autoref{cor:convergence_rates} that \ref{en:main_ivi} implies \ref{en:main_nfcr} and \ref{en:main_ncr}.

Now, \autoref{th:neu_follow} with $\rho=1$ shows that \ref{en:main_scr} implies \ref{en:main_hvi}, which proves the equivalence of all conditions but \ref{en:main_svi}.

And finally, the equivalence of \ref{en:main_svi} and \ref{en:main_scr} follows directly from \autoref{th:neu_follow} with $\rho=2$.

\end{proof}

We briefly comment on the case $\nu = 1$. We have already seen in \autoref{l:first} that in this case the \ssc, the \hvi\ and the \ivi\ (all with the parameter $1$) are equivalent. Moreover, because of \autoref{cor:convergence_rates} they also imply all the conditions of \autoref{th:neu}. However, the converse is not true.
\begin{example}
	Let $U$ be a real, separable Hilbert space with orthonormal basis $\{ \varphi_n \}_{n \in \N}$.
	We define the compact linear operator $L : U \to U$ by $L\varphi_n = n^{-\frac12}\varphi_n$.

	Then, for the data 
	\[ y = \sum_{n\ge1}n^{-\frac32}\varphi_n\in\mathcal R(L), \]
	the minimum-norm solution $\udag$ of problem \eqref{eq:op} has \scr\ of order $1$, but the problem does not fulfill the \ivi\ with the parameter $\mu=1$.
\end{example}
\begin{proof}
	We see that the minumum norm solution $\udag$ is explicitly given by
	\[ \udag = \sum_{n \geq 1} n^{-1} \varphi_n \]
	and thus has a \scr\ of order $1$:
	\[ \norm{E_{[0,\lambda]}\udag}^2 = \sum_{n \geq \lambda^{-2}} |\inner{ \udag, \varphi_n }|^2 = \sum_{n \geq\lambda^{-2}} n^{-2} \le C \lambda^2 \]
	for all $\lambda\in[0,\norm{L}]$ for some constant $C>0$, where $A\mapsto E_A$ again denotes the spectral measure of $L^*L$.

	However, for $u_N=N^{-1}\sum_{n\le N}\varphi_n$, we find that
	\[ \frac{ 2 \inner{ \udag, u_N } }{ \norm{ L u_N } + \norm{ u_N }^2 } = \frac{ 2 N^{-1} H_N }{ N^{-1} H_N^{1/2} + N^{-2} \sum_{n \leq N} 1 } \geq H_N^{1/2}, \]
	where
	$H_N := \sum_{n \leq N} n^{-1}$ denotes the $N$-th harmonic number.
	Because $H_N \to \infty$ as $N \to \infty$, the \ivi\ with parameter $\mu=1$ cannot be satisfied.
\end{proof}

The condition \eqref{eq:source_new_1a} seems to be the natural condition for convergence rates.
It is a necessary and sufficient condition for the rate
$\mathcal{O}(\alpha^{\frac{\nu}{2}})$,
while the standard range condition \eqref{eq:source_old} leaves a small gap.

\autoref{cor:neu} guarantees that the variational source conditions are optimal conditions for convergence rates.

\begin{figure}[ht]
\begin{center}
\begin{tikzpicture}[x=1cm,y=1cm,font=\scriptsize]
\draw[rounded corners=1mm] (0,0) rectangle (3,1) node[pos=0.5,xshift=0.1cm] {\parbox{2.8cm}{Standard\\source condition\\with parameter $\nu$}};
\draw[-implies,double] (3.1,0.4) -- (3.9,0.4) node[below,pos=0.5,yshift=-0.35cm] {\parbox{2.8cm}{\centering\tiny for $\nu\in(0,1]$,\\see \autoref{l:first}\ref{en:ssc_hvi}}};

\draw[rounded corners=1mm] (4,0) rectangle (7,1) node[pos=0.5,xshift=0.1cm] {\parbox{2.8cm}{Homogeneous\\ variational inequality\\with parameter $\nu$}};

\draw[-implies,double] (7.1,0.4) -- (7.9,0.4) node[below,pos=0.5,yshift=-0.35cm] {\parbox{2.8cm}{\centering\tiny for $\nu\in(0,1]$,\\see \autoref{l:first}\ref{en:hvi_ivi}}};
\draw[rounded corners=1mm] (8,0) rectangle (11,1) node[pos=0.5,xshift=0.1cm] {\parbox{2.8cm}{Inhomogeneous\\ variational inequality\\with parameter $\frac{2\nu}{\nu+1}$}};

\draw[rounded corners=1mm] (8,-3) rectangle (11,-4) node[pos=0.5,xshift=0.1cm] {\parbox{2.8cm}{Convergences rates\\of order $\frac\nu{\nu+1}$}};
\draw[-implies,double] (9,-0.1) -- (6.5,-2.9) node[right,pos=0.5,xshift=0.1cm] {\parbox{2cm}{\centering\tiny for $\nu\in(0,1]$,\\see \autoref{cor:convergence_rates}}};
\draw[-implies,double] (10,-0.1) -- (10,-2.9);

\draw[implies-implies,double] (7.1,-3.5) -- (7.9,-3.5) node[below,pos=0.5,yshift=-0.45cm] {\parbox{2.8cm}{\centering\tiny for $\nu\in(0,2)$, \\see \autoref{th:neu} and \cite{Neu97}}};
\draw[rounded corners=1mm] (4,-3) rectangle (7,-4) node[pos=0.5,xshift=0.1cm] {\parbox{2.8cm}{\raggedright Noise-free convergence rates of order $\frac\nu2$}};

\draw[implies-implies,double] (3.1,-3.5) -- (3.9,-3.5) node[below,pos=0.5,yshift=-0.45cm] {\parbox{2.8cm}{\centering\tiny for $\nu\in(0,2)$, \\see \autoref{th:neu} and \cite{Neu97}}};
\draw[rounded corners=1mm] (0,-3) rectangle (3,-4) node[pos=0.5,xshift=0.1cm] {\parbox{2.8cm}{\raggedright Spectral characterization\\ of order $\nu$}};

\draw[implies-implies,double] (5,-0.1) -- (2.5,-2.9) node[right,pos=0.5,xshift=0.2cm] {\parbox{2cm}{\centering\tiny for $\nu\in(0,1)$,\\ see \autoref{th:neu_follow}\\ with $\rho=1$}};

\draw[-implies,double,densely dashed] (7.9,0.75) to[out=150,in=30] (3.1,0.75);
\draw (5.5,2.1) node {\parbox{2.8cm}{\centering\tiny for $\nu=1$, \\ see \autoref{l:first}\ref{en:nu_eq_1}; \\ or for $\nu\in(0,1]$ \\ if $\mathcal R(L^*L)$ is closed, \\ see \autoref{ex:general}}};

\draw[rounded corners=1mm](0,-1) rectangle (3,-2) node[pos=0.5,xshift=0.1cm] {\parbox{2.8cm}{Symmetrized \\ variational imequality \\ with parameter $\nu$}};

\draw[implies-implies,double] (1.5,-2.1) -- (1.5,-2.9) node[pos=0.5,left] {\parbox{2cm}{\centering\tiny for $\nu\in(0,2)$,\\ see \autoref{th:neu_follow}\\ with $\rho=2$}};
\draw[-implies,double] (0.5,-0.1) -- (0.5,-0.9) node[pos=0.5,left] {\parbox{1.6cm}{\centering\tiny for $\nu\in(0,2]$,\\ see \autoref{l:first_extended}\ref{en:first_extended_svi}}};
\draw[implies-implies,double,densely dashed] (2.3,-0.1) -- (2.3,-0.9) node[pos=0.5,left,xshift=0.1cm] {\parbox{1.6cm}{\centering\tiny for $\nu=2$, see\\ \autoref{l:first_extended}\ref{en:first_extended_ssc}}};

\end{tikzpicture}
\end{center}
\caption{Relation between the different source conditions and the convergence rate results.}
\label{fg:relations}
\end{figure}
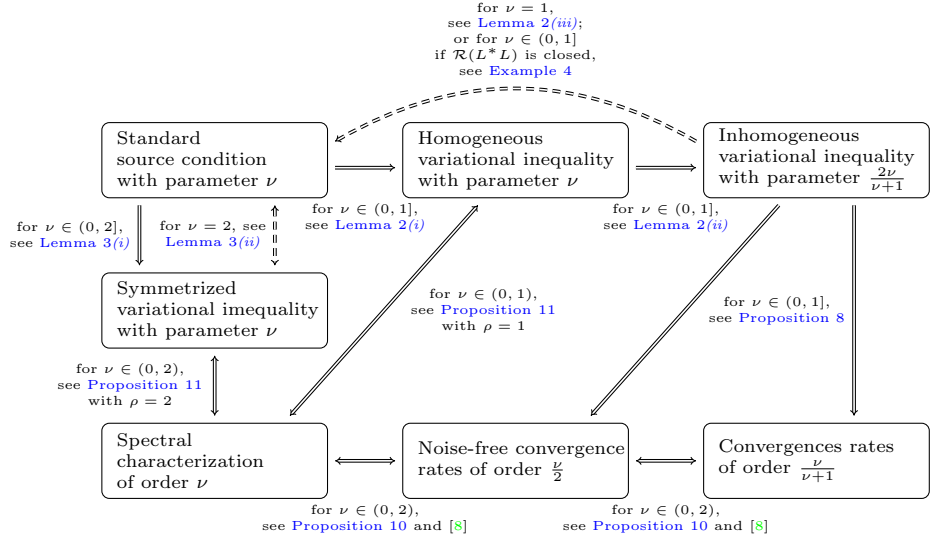

\section*{Acknowledgment}
This work has been supported by the Austrian Science Fund (FWF)
within the national research network Geometry + Simulation
(project S11704, Variational Methods for Imaging on Manifolds). 
This research was supported in part by National Science Foundation grant CMG DMS-1025318, and in part by the members of the Geo-Mathematical Imaging Group at Purdue University. 

\section*{Conclusion}
In this paper we have developed a series of novel variational source conditions as alternatives to classical source conditions to prove convergence rates results for 
Tikhonov regularization in an Hilbert space setting. In many cases the new source conditions provide optimal convergence rates, opposed to the standard source conditions. 
The interplay between various source conditions and convergence rates is developed in detail and summarized in Table \ref{fg:relations}. As a side product we could clarify 
an open question in \cite{SchKalHofKaz12}. An open question is of course how these results can be generalized to non-linear ill--posed problems, to Banach spaces or 
general topological spaces, and to other regularization methods.

\section*{Appendix}
\begin{lemma}
\label{l:Lambda}
	Let $T:U\to U$ be a self-adjoint, non-negative definite, bounded linear operator on a real Hilbert space $U$, $\udag, u \in U$, and $\Lambda \geq 0$.
	We denote with $A\mapsto E_A$ the projection-valued spectral measure of $T$ and define for all $v,w\in U$ the signed measure $A\mapsto\mu_{v,w}(A)=\inner{E_Av,w}$.
	
	Then, we have for all $0\le a\le b$ and every $\rho\in\R$ that	
	\begin{equation}\label{eq:Lambda}
	|\mu_{\udag,u}|([a,b]) \le \left(\int_{[a,b]} \lambda^{-\rho}\d\mu_{\udag,\udag}(\lambda)\right)^{\frac12} \left(\int_{[a,b]}\lambda^\rho\d\mu_{u,u}(\lambda)\right)^{\frac12}.
	\end{equation}
\end{lemma}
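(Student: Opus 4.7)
The plan is to reduce the inequality to a weighted Cauchy--Schwarz in $U$ via the functional calculus of $T$. The key fact I will use repeatedly is that, for every bounded Borel function $f$ on the spectrum of $T$ and all $v,w\in U$, one has $\int_A f(\lambda)\,d\mu_{v,w}(\lambda) = \inner{f(T)v, w}$, and in particular $\int_A f\,d\mu_{v,v} = \norm{f^{1/2}(T) E_A v}^2$ whenever $f\ge 0$.

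As a first step, I would apply the Jordan decomposition of the signed measure $\mu_{\udag,u}$ to produce a Borel function $h$ with $|h|\le 1$ such that
\[
    |\mu_{\udag,u}|([a,b])
    = \int_{[a,b]} h(\lambda)\,d\mu_{\udag,u}(\lambda)
    = \inner{h(T) E_{[a,b]}\udag, u}.
\]
Assume first that $a>0$, so that $\lambda\mapsto\lambda^{\pm\rho/2}$ is bounded on $[a,b]$. Using $\lambda^{-\rho/2}\cdot\lambda^{\rho/2}=1$ on $[a,b]$ together with the self-adjointness of $\lambda^{\mp\rho/2}(T)$ and the commutativity of the functional calculus, I would rewrite
\[
    \inner{h(T) E_{[a,b]}\udag, u}
    = \inner{\lambda^{-\rho/2}(T) E_{[a,b]}\udag,\;h(T)\,\lambda^{\rho/2}(T) E_{[a,b]} u}.
\]
Cauchy--Schwarz in $U$ and the operator-norm bound $\norm{h(T)}\le 1$ (coming from $|h|\le 1$) then yield
\[
    |\mu_{\udag,u}|([a,b])
    \le \norm{\lambda^{-\rho/2}(T) E_{[a,b]}\udag}\,\norm{\lambda^{\rho/2}(T) E_{[a,b]} u},
\]
and each factor on the right is exactly the square root of the corresponding integral in \eqref{eq:Lambda}.

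The only subtlety is the boundary case $a=0$ (the case $\rho=0$ being standard Cauchy--Schwarz). When $\rho>0$ the weight $\lambda^{-\rho}$ is unbounded near the origin, and analogously for $\rho<0$. I would handle this by applying the bound already proved on $[a_n,b]$ with $a_n\searrow 0$ and passing to the limit: by monotone convergence the right-hand side tends to the corresponding integral over $[0,b]$, which may equal $+\infty$ (in which case the inequality is vacuous), while the left-hand side tends to $|\mu_{\udag,u}|((0,b])$. The possibly missing mass at $\{0\}$ is controlled by the $\rho=0$ case and vanishes whenever the right-hand side is finite, since finiteness of $\int_{\{0\}}\lambda^{-\rho}\,d\mu_{\udag,\udag}$ with $\rho>0$ forces $E_{\{0\}}\udag=0$ (and symmetrically for $\rho<0$ on the $u$-side). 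This bookkeeping is the only mildly delicate step; the algebraic heart of the proof is the single weighted Cauchy--Schwarz estimate above.
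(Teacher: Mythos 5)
Your argument is correct, and it reaches the same weighted Cauchy--Schwarz inequality as the paper, but by a genuinely different route. The paper first invokes the spectral representation theorem to model $T$ as a multiplication operator $u\mapsto mu$ on some $L^2(\Omega;\mu)$; there the variation $|\mu_{\udag,u}|([a,b])$ becomes (an upper bound by) $\int_{m^{-1}([a,b])}|\udag|\,|u|\,\d\mu$, and the inequality is just scalar Cauchy--Schwarz with the weights $m^{\mp\rho}$ --- short, but it outsources all the work to the representation theorem and quietly ignores the set $\{m=0\}$, where the factorization $|\udag|\,|u|=(m^{-\rho/2}|\udag|)(m^{\rho/2}|u|)$ degenerates. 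You instead stay in the abstract Hilbert space: the Hahn decomposition linearizes the total variation as $\inner{h(T)E_{[a,b]}\udag,u}$ with $\|h\|_\infty\le1$, the identity $\lambda^{-\rho/2}\lambda^{\rho/2}=1$ on $[a,b]$ (for $a>0$) together with self-adjointness and commutativity of the bounded functional calculus splits the pairing, and Cauchy--Schwarz in $U$ finishes. This costs you the explicit limiting argument $a_n\searrow0$ and the discussion of the atom at $\lambda=0$, but that bookkeeping is exactly the point the paper's proof glosses over, so your version is if anything more careful. (In the one truly degenerate configuration --- $a=0$, $\rho\neq0$, and \emph{both} $E_{\{0\}}\udag\neq0$ and $E_{\{0\}}u\neq0$ --- the right-hand side is an $\infty\cdot0$ product and the inequality only holds under the convention that this product is $+\infty$; this affects the paper's proof identically and is irrelevant for the application, where the lemma is used either with $\rho=0$ or on $[\Lambda,\infty)$ with a bound that is vacuous for $\Lambda=0$.) Both approaches are fine; the paper's is shorter if one is willing to cite the multiplication-operator form of the spectral theorem, while yours uses only the Borel functional calculus and the spectral measure directly, which fits the lemma's own notation more closely.
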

\begin{proof}
	Using the spectral representation theorem, see for instance \cite[Chapter~X.5.3, Corollary~4]{DunSchw63},
	we may assume that 
	the operator $T$
	is a multiplication operator $u \mapsto m u$, 
	where $m \geq 0$ is a bounded measurable function
	on a measure space $(\Omega, \Sigma, \mu)$
	and $U$ is the Lebesgue space $U = L^2(\Omega;\mu)$.
	In this case,
	the spectral measure of $T$ 
	is given by
	$\inner{ E_A \udag, u } = \int_{m^{-1}(A)} \udag u \d\mu$.
	Therefore, for every $\rho\in\R$,
	we have the representation
	\[ \int_{[a,b]}\lambda^\rho\d|\mu_{\udag,u}|(\lambda) = \int_{m^{-1}([a,b])} m^\rho |\udag|\, |u|\,\d\mu. \]
	
	Thus we can estimate with the Cauchy--Schwarz inequality for arbitrary $\rho\in\R$:
	\begin{align*}
	|\mu_{\udag,u}|([a,b]) &= \int_{m^{-1}([a,b])}|\udag|\,|u|\,\d\mu(\lambda) \\
	&\leq \left(\int_{m^{-1}([a,b])} m^{-\rho} |\udag|^2 \d\mu\right)^{\frac12} \left(\int_{m^{-1}([a,b])} m^{\rho} |u|^2 \d\mu\right)^{\frac12} \\
	&= \left(\int_{[a,b]} \lambda^{-\rho}\d\mu_{\udag,\udag}(\lambda)\right)^{\frac12}\left(\int_{[a,b]} \lambda^\rho\d\mu_{u,u}(\lambda)\right)^{\frac12}.
	\end{align*}
\end{proof}

\begin{lemma}
\label{l:reverse}
	Let $\mu$ be a non-negative finite Borel measure on $\R$ 
	with compact support in $[0, \infty)$.
	Let $0 \leq \nu < \rho$.
	Suppose that there exists a constant $C > 0$
	such that
	$\mu([0, \lambda)) \leq C \lambda^\nu$ for all $\lambda \geq 0$.
	Then 
	\begin{equation}\label{eq:est_integral}
	\int_{[\Lambda,\infty)} \lambda^{-\rho}\d\mu(\lambda) \le C \frac{\rho}{\rho-\nu} \Lambda^{\nu-\rho} \quad\text{for all}\quad \Lambda > 0.
	\end{equation}
\end{lemma}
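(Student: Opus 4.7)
The plan is a standard layer-cake / Fubini computation. Since $\nu \ge 0$ and $\nu < \rho$, we have $\rho > 0$, so for every $\lambda > 0$ the identity
\[
\lambda^{-\rho} = \rho\int_\lambda^\infty t^{-\rho-1}\,\d t
\]
holds. I would substitute this representation into the left-hand side of \eqref{eq:est_integral} and swap the order of integration (Tonelli, since all integrands are non-negative). The region of integration is $\{(\lambda,t) : \Lambda \le \lambda \le t < \infty\}$, so this yields
\[
\int_{[\Lambda,\infty)}\lambda^{-\rho}\,\d\mu(\lambda)
= \rho\int_\Lambda^\infty t^{-\rho-1}\,\mu([\Lambda,t])\,\d t.
\]

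Next I would bound $\mu([\Lambda,t]) \le \mu([0,t])$. The hypothesis gives $\mu([0,\lambda)) \le C\lambda^\nu$ only for half-open intervals, so I would pass to $\mu([0,t])$ via continuity from above for the finite measure $\mu$: since $[0,t] = \bigcap_{n\ge 1}[0,t+\tfrac1n)$, one has
\[
\mu([0,t]) = \lim_{n\to\infty}\mu\bigl([0,t+\tfrac1n)\bigr) \le \lim_{n\to\infty}C\bigl(t+\tfrac1n\bigr)^\nu = Ct^\nu
\]
by continuity of $\lambda \mapsto \lambda^\nu$.

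Plugging this in produces a one-dimensional integral
\[
\int_{[\Lambda,\infty)}\lambda^{-\rho}\,\d\mu(\lambda)
\le C\rho\int_\Lambda^\infty t^{\nu-\rho-1}\,\d t,
\]
and because $\nu-\rho < 0$ this elementary integral equals $\Lambda^{\nu-\rho}/(\rho-\nu)$, giving exactly the claimed bound $C\frac{\rho}{\rho-\nu}\Lambda^{\nu-\rho}$.

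There is no genuine obstacle; the only small points to be careful about are (i) invoking Tonelli rather than Fubini to avoid any integrability concern, and (ii) the passage from $\mu([0,\lambda))$ in the hypothesis to $\mu([0,t])$ in the estimate, which is why the finiteness of $\mu$ (guaranteed here by its compact support) is used. The compact support assumption itself is not otherwise needed, since the decay $t^{\nu-\rho-1}$ with $\nu-\rho-1 < -1$ ensures convergence at infinity.
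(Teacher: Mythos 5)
Your proof is correct, and it reaches the sharp constant $C\rho/(\rho-\nu)$ by a route that differs in mechanism from the paper's. The paper writes the integral as a Stieltjes integral $\int_\Lambda^\infty g(\lambda)\,\d I(\lambda)$ with $I(\lambda)=\mu([0,\lambda))$ and $g(\lambda)=\lambda^{-\rho}$, integrates by parts, discards the non-positive boundary term, and then inserts the bound $I(\lambda)\le C\lambda^\nu$ into $-\int_\Lambda^\infty I(\lambda)g'(\lambda)\,\d\lambda$. You instead expand $\lambda^{-\rho}=\rho\int_\lambda^\infty t^{-\rho-1}\,\d t$ and apply Tonelli, which is the layer-cake version of the same computation: after swapping the order of integration your factor $\mu([\Lambda,t])$ plays exactly the role of the paper's $I$, and the final elementary integral is identical. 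What your version buys is that it bypasses the Stieltjes integration-by-parts formula for a possibly discontinuous integrator (the paper has to lean on the smoothness of $g$ and the finiteness of $\mu$ to justify that step and to control the boundary term at infinity), and it makes explicit that only non-negativity of the integrand is needed, not compact support. Your care in passing from $\mu([0,\lambda))$ to $\mu([0,t])$ via continuity from above is sound; note that if you track the strict inequality $\lambda<t$ coming from the inner integral, Tonelli actually hands you $\mu([\Lambda,t))\le\mu([0,t))\le Ct^\nu$ directly, so even that small limiting argument can be dispensed with.
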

\begin{proof}
	For $\lambda \geq 0$ define
	$I(\lambda) := \mu([0, \lambda))$
	and
	$g(\lambda) := \lambda^{-\rho}$.
	Then we can write the above integral 
	as a Stieltjes integral,
	and apply integration by parts,
	\begin{align*}
		\int_{[\Lambda,\infty)} \lambda^{-\rho}\d\mu(\lambda)
	& =
		\int_\Lambda^\infty g(\lambda) \d I(\lambda)
	\\
	& =
		\left.
			g(\lambda) I(\lambda) 
		\right|_{\lambda = \Lambda}^{\lambda = \infty}
		-
		\int_\Lambda^\infty I(\lambda) \d g(\lambda)
		.
        \end{align*}
	
	Because $\mu$ is finite and $g(\infty)=0$ it follows that $\left.
			g(\lambda) I(\lambda) 
		\right|_{\lambda = \Lambda}^{\lambda = \infty} \leq 0$. Therefore, and taking into account that 
		$g$ is smooth, it follows that
	\[
		\int_{[\Lambda,\infty)} \lambda^{-\rho}\d\mu(\lambda)
	\leq -
		\int_\Lambda^\infty I(\lambda) \d g(\lambda)
	 = -
		\int_\Lambda^\infty I(\lambda) g'(\lambda) \d\lambda. \]

		Then we use the assumption $I(\lambda) \leq C \lambda^\nu$ and the monotonicity of $g$, to see that
	\[ \int_{[\Lambda,\infty)} \lambda^{-\rho} \d\mu(\lambda) \leq - C \int_\Lambda^\infty \lambda^\nu g'(\lambda) \d\lambda = C \frac{\rho}{\rho - \nu} \Lambda^{\nu - \rho} . \]
	Thus the assertion is proved.
\end{proof}


\end{document}